\documentclass[reqno, 11pt]{amsart}
\usepackage{amssymb,latexsym, bbm,amsmath, amsthm, mathrsfs, xy,color}
\usepackage{tikz-cd}

\usepackage[shortlabels]{enumitem}

\usepackage{setspace}

\xyoption{all}

\newtheorem{mythm}{Theorem}[section]

\newtheorem{myprop}[mythm]{Proposition}
\newtheorem{mycor}[mythm]{Corollary}

\theoremstyle{definition}
\newtheorem{mydef}[mythm]{Definition}

\newtheorem{myassu}[mythm]{Assumption}

\theoremstyle{remark}
\newtheorem{myrem}[mythm]{Remark}

\DeclareMathOperator{\im}{Im}
\DeclareMathOperator{\Ext}{Ext}
\DeclareMathOperator{\Gal}{Gal}
\DeclareMathOperator{\Hom}{Hom}

\DeclareMathOperator{\tr}{tr}
\DeclareMathOperator{\Tot}{Tot}
\DeclareMathOperator{\Frob}{Frob}
\DeclareMathOperator{\Symm}{Symm}
\DeclareMathOperator{\ord}{ord}
\DeclareMathOperator{\cond}{cond}

\newcommand{\C}{\mathbf{C}}
\newcommand{\Q}{\mathbf{Q}}
\newcommand{\Z}{\mathbf{Z}}
\newcommand{\F}{\mathbf{F}}

\newcommand{\T}{\mathbf{T}}
\newcommand{\R}{\mathbf{R}}

\newcommand{\Fil}{\text{Fil}}

\numberwithin{equation}{section}

\title[Galois deformation rings and modularity]{Galois deformation rings and modularity in the residually reducible case}
\author[Geoffrey Akers]{Geoffrey Akers}

\begin{document}

\begin{abstract}
We prove some residually reducible $p$-adic Galois representations of number fields arise from modular forms.  We study the universal deformation ring arising from deformations satisfying the Fontaine--Laffaille condition at primes over $p$.  Under certain conditions, we establish the reduced universal deformation ring is a discrete valuation ring.  The method uses pseudocharacters and certain bounds on Selmer groups, where the ideal of reducibility as defined by Bella{\"i}che and Chenevier is shown to be maximal and principal.  A self-dual assumption is not needed in our argument.  The main result on deformations applies to $n$-dimensional representations.  In applications, a congruence between Hermitian modular forms due to Klosin is used with our results to prove modularity of some 4-dimensional $p$-adic representations of the imaginary quadratic field $\Q(i)$.
\end{abstract}

\keywords{Galois representations; Galois deformations; residually reducible; Hermitian modular forms.}

\subjclass{11F80, 11F55, 11F33}

\thanks{Electronic version of an article published as
\emph{International Journal of Number Theory}, Vol.~21, No.~2 (2025), pp.~449--471,
DOI: 10.1142/S1793042125500228,
\copyright\ World Scientific Publishing Company,
\texttt{https://www.worldscientific.com/worldscinet/ijnt}.}

\maketitle
\setcounter{tocdepth}{3}

\section{Introduction}

Let $F$ be a number field and let $\rho:\Gal(\overline F / F)\to GL_n(\overline{\Q_p})$ be an irreducible Galois representation unramified outside a finite set of places of $F$.  Suppose $\rho$ is valued in $GL_n(E)$ with associated residual representation $\overline \rho$, where $E$ is a finite extension of $\Q_p$ with ring of integers $\mathcal{O}$ and uniformizer $\varpi$.  The modularity of such $\rho$ with additional hypotheses on $\rho$ and $\overline \rho$, including on the restriction of $\rho$ to the decomposition groups $D_v$ for each place $v$ of $F$ lying over $p$, has been proven in many cases by establishing that certain Hecke algebras are universal deformation rings parametrizing deformations of certain mod $p$ representations, known as $R=\T$ theorems.  In this paper we treat residually reducible $\rho$, and assume the associated residual representation $\overline \rho$ has semi-simplification $\overline \rho ^{ss} \cong \tau_1 \oplus \tau_2$ with $\tau_1$ and $\tau_2$ non-isomorphic and absolutely irreducible representations.  The strategy is to consider a universal deformation ring for a non-split residual representation $\tau$ of $G_\Sigma = \Gal (F_\Sigma /F)$, the Galois group of the maximal extension of $F$ unramified outside $\Sigma$, with $\tau^{ss}\cong \tau_1 \oplus \tau_2$.

In dimension $n=2$, a deformation $\rho':G_\Sigma \to GL_2(A)$ is ordinary if it satisfies
$$
\rho'|_{D_v} \cong \begin{bmatrix}
\eta_1 & * \\
0 & \eta_2
\end{bmatrix}
$$ for $v|p$ where $\eta_1$ is an unramified character.  Modularity results and $R=\T$ theorems were proved for certain ordinary representations for $F=\Q$ by Skinner and Wiles \cite{skinner1997ordinary}, and for ordinary representations satisfying an additional minimal ordinary condition with $F$ imaginary quadratic by Berger and Klosin \cite{berger2009deformation, berger2011r}.  Some $R=\T$ theorems for non-ordinary representations were proved in \cite{calegari2006eisenstein}.  In dimension $n\geq 2$ and $F$ a number field, $R=\T$ theorems were proved for crystalline representations by Berger and Klosin \cite{berger2013deformation}.  The modularity results of \cite{skinner1999residually} in the ordinary case for $F=\Q$ and $F$ totally real with $n=2$ were proved by a different method since there is not a suitable surjective map $R\to \T$.  For an ordinary pseudo-deformation ring $R$ and a Hida Hecke algebra $\T$, Wake and Wang-Erickson \cite{wake2017ordinary} proved $R=\T$ and established some of the results of \cite{skinner1999residually} by a new technique.

In \cite{skinner1997ordinary}, $\tau^{ss}\cong \chi \oplus 1$ and conditions on $\Sigma$, $\chi$, and $p$ ensure $\tau$ is the unique non-split extension of 1 by $\chi$.  The existence of a certain minimal ordinary reducible characteristic zero deformation leads to an associated Galois cohomology group that is compared with a Hecke congruence module.  An $R=\T$ theorem for ordinary minimal deformations follows from a numerical criterion of Lenstra and Wiles, and the authors then prove $R=\T$ in the ordinary case.  In \cite{calegari2006eisenstein} for some non-ordinary deformations and in \cite{berger2009deformation} for minimal ordinary deformations of $\tau$, a non-trivial extension of $\chi$ by 1 for a certain character $\chi$, the authors prove $R=\T$ theorems by establishing $R$ is a discrete valuation ring.  Also using the method of \cite{bellaiche2006lissite} with $\rho^{R}$ the universal deformation corresponding to $R$, these authors study the ideal of reducibility defined to be the smallest ideal $I$ such that $\tr(\rho^{R}) \mod I$ is the sum of two characters.  A crucial ingredient in their method is the assumption that the modules of extensions of $\tau_2$ by $\tau_1$ and $\tau_1$ by $\tau_2$ have $\F$-dimension 1.  The method to prove $R$ is DVR in \cite{calegari2006eisenstein} and \cite{berger2009deformation} also requires there to be no non-trivial upper-triangular infinitesimal deformations (to $\F [x]/x^2$).  In these works, the reducibility ideal is maximal if and only if there does not exist a surjective map $R/I \to \F[x]/x^2$ or $R/I \to \mathcal{O}/\varpi^2 \mathcal{O}$.  By universality of $R$, these surjective maps correspond to suitable non-trivial upper-triangular deformations to $GL_2(\F[x]/x^2)$ or $GL_2 (\mathcal{O}/\varpi^2 \mathcal{O})$.

In \cite{berger2011r}, with $n=2$, and \cite{berger2013deformation}, where $n\geq 2$ and $\tau^{ss}\cong \tau_1\oplus \tau_2$, the authors establish commutative algebra criteria for establishing $R=\T$ for $R$ not necessarily a DVR.  In these works, it remains true that there are no non-trivial (block) upper-triangular infinitesimal deformations (of type ordinary minimal in \cite{berger2011r} and crystalline in \cite{berger2013deformation}).  In dimension $n=2$, the ideal of reducibility is principal following \cite{bellaiche2006lissite} and \cite{calegari2006eisenstein} and $R=\T$ is established provided $R/I$ is a cyclic Artinian module with the size of $R/I$ providing a lower bound on the size of $\T/J$ for some Eisenstein ideal $J$.  In applying the commutative algebra criteria \cite[Proposition 6.9]{berger2011r} and \cite[Theorem 4.1]{berger2013deformation}, the principality of $I$ is used to prove a surjective map $R\to \T$ is an isomorphism.  In \cite{berger2013deformation}, where $n\geq 2$, the principality of the reducibility ideal is established under a self-dual hypothesis.  In the method of Berger--Klosin, the lack of non-trivial infinitesimal upper-triangular deformations is used to show $R/I = \mathcal{O}/\varpi^s$ for some $s>0$, regardless of $I$ being maximal in $R$.  An upper-bound for $s$ is determined by a bound on a Selmer group (see Theorem 8.5 and Proposition 7.14 in \cite{berger2013deformation}).

There are differences between the methods of the current article with those of \cite{skinner1997ordinary, calegari2006eisenstein, berger2009deformation, berger2011r, berger2013deformation}.  Our setup of Fontaine--Laffaille deformation problems is similar to \cite{berger2013deformation}, but we work with the reduced universal deformation ring.  In \cite{berger2013deformation}, two methods for the principality of the reducibility ideal are given, but in this article we use a different method.  We establish sufficient conditions for $R$ to be DVR.  In contrast to the "essentially self-dual" deformations studied in \cite[Theorem 2.11]{berger2013deformation} and \cite[Sec. 1.8]{bellaiche2009families}, we make no assumption on an anti-automorphism of $R[G]$ in Section \ref{sec:reducibility}.  We use that there is only one non-split extension of the two representations $\tau_1$ and $\tau_2$ (in either order) in place of a self-dual assumption to prove the principality of $I$.  The principality of $I$ in this setup is a main result of the current paper.  Under our assumptions, the main ones consisting of bounds on Selmer groups, the principality of the reducibility ideal is established by showing the injective map used to construct explicit extensions of $\tau_1$ by $\tau_2$ has image in a particular Selmer group.  As in \cite{berger2013deformation}, our $\tau$ has no non-trivial upper-triangular infinitesimal deformations.  Our assumptions on Selmer groups ensure that there is a unique extension of $\tau_2$ by $\tau_1$ and that there is no non-trivial Fontaine--Laffaille upper-triangular deformation of $\tau$ to $\mathcal{O}/\varpi^2 \mathcal{O}$.  So $I$ is also maximal in $R$.

As an application of the main result we prove some modularity results for irreducible, residually reducible representations.  We make use of congruences between modular forms constructed by others.  This method has origins in the work of Ribet using congruences between cusp forms and Eisenstein series in proving the converse to Herbrand's theorem \cite{ribet1976modular}.  The irreducible Galois representation $\rho_g$ attached to a cusp form $g$ satisfying such a congruence will then have an associated mod $p$ representation that is reducible, not semi-simple, and has the desired semi-simplification.  Congruences between automorphic forms on unitary groups have been the subject of considerable interest \cite{skinner2014iwasawa, brown2020congruence, klosin2009congruences, klosin2015maass}.  A congruence between a Maass lift of an ordinary at $p$ eigenform and a non-CAP Hermitian modular form is analogous to a congruence between an Eisenstein series and a cusp form.  Let $K$ be an imaginary quadratic field.  In this paper, we consider the unitary group $U(2,2)$ associated to $K=\Q(i)$ over $\Q$.  We establish modularity results for 4-dimensional representations of $\Gal(K_\Sigma/K)$ using congruences constructed by Klosin \cite{klosin2009congruences} under certain conditions including on special $L$-values (see Section \ref{sec_congruence}).  We remark that relevant congruences exist when $K$ has odd discriminant \cite{klosin2015maass} and our theorem applies to those fields.

We assume $n! \in R^\times$ in Section \ref{sec:reducibility}, where $n$ is the dimension of the representations, and in the results of Section \ref{sec:r_dvr} we assume $p\nmid n!$, which means we exclude the prime $p=2$.  This condition implies $n!\in \F^\times$, and appears in the definition of pseudocharacter (cf. \cite[Sec. 1.2]{bellaiche2009families}).

We now give an outline of this paper.  In Section \ref{setup}, we describe the Fontaine--Laffaille representations and Selmer groups used throughout.  In Section \ref{sec:reducibility}, we study the ideal of reducibility and work with a general group $G$ (there is no mention of Galois groups).  We study certain quotients of $R[G]$ using tools from \cite{bellaiche2009families}, and find the reducibility ideal $I$ is an ideal of $R$.  We give sufficient conditions for $I$ to be principal.  In Section \ref{sec:r_dvr}, we prove the main result on Galois representations (Theorem \ref{main_thm}).  The principality of $I$ is established by showing the injective map used to construct explicit extensions has image in a particular Selmer group (see Proposition \ref{prop_selmer}).  In Section \ref{sec:2dim} for $2$-dimensional representations of $\Q$ and Section \ref{sec:4dim} for $4$-dimensional representations of $\Q(i)$ we apply our result to modularity problems.

\section*{Notations}
For any field field $L$, we let $\overline L$ denote a fixed algbraic closure and we write $G_L = \Gal(\overline L / L)$ for the absolute Galois group of $L$.  For a number field $F$, we denote by $Cl_F$ the class group of $F$, and for a place $v$ of $F$ we will denote by $F_v$ the completion of $F$ at $v$.

\section{Setup}\label{setup}
Let $F$ be a number field and $p>2$ a prime satisfying $p\nmid \# Cl_F$ with $p$ unramified in $F/\Q$.  Let $\Sigma$ be a finite set of finite places of $F$ containing all the places of $F$ lying over $p$.  Let $G_\Sigma$ denote the Galois group $\Gal(F_\Sigma / F)$, where $F_\Sigma$ is the maximal extension of $F$ unramified outside $\Sigma\cup \left\{ v|\infty \right\}$.  For each prime $\mathfrak{q}$ of $F$, we fix compatible embeddings $\overline F \hookrightarrow \overline {F_\mathfrak{q}} \hookrightarrow \C$ and write $D_\mathfrak{q} \subset G_F$ for a decomposition subgroup of $G_F$.  By a slight abuse of notation, also write $D_\mathfrak{q}$ for the image of $G_{F_\mathfrak{q}}$ in $G_\Sigma$.  Let $E$ be a (sufficiently large) finite extension of $\Q_p$ with ring of integers $\mathcal{O}$, uniformizer $\varpi$, and residue field $\F$.

\subsection{Fontaine--Laffaille representations}\label{f-l}
We recall some of the theory of Fontaine and Laffaille \cite{fontaine1982construction}, following the exposition in \cite[Sec. 5.2.1]{berger2013deformation}.  Let $MF_\mathcal{O}$ denote the category of filtered Dieudonn\'e modules $M$ over $\mathcal{O}$ together with a decreasing filtration $\Fil^i M$ by $\mathcal{O}$-submodules which are $\mathcal{O}$-direct summands with $\Fil^0 M = M$ and $\Fil^{p-1}M = \left\{ 0 \right\}$ and Frobenius linear maps $\Phi^i: \Fil^i M \to M$ with $\Phi^i|_{\Fil^{i+1}M}= p\Phi^{i+1}$ and $\sum_i \Phi^i \Fil^i M = M$.  The definition will use the (covariant) Fontaine--Laffaille functor $\mathbf{G}$ from $MF_\mathcal{O}$ to the category of finitely generated $\mathcal{O}$-modules with a continuous action of $G_K$, where $K$ is an unramified extension of $\Q_p$ (see \cite[Sec. 2.4.1]{clozel2008automorphy} for the definition of $\mathbf{G}$).

We recall the definition of a \textit{Fontaine--Laffaille representation} following Section 2.4.1 of \cite{clozel2008automorphy}:  Let $v$ be a place of $F$ with $v|p$ and let $A$ be a complete Noetherian $\Z_p$-algebra with residue field $\F$.  A continuous representation $\rho: D_v \to GL_n(A)$ is Fontaine--Laffaille if for each Artinian quotient $A'$ of $A$, $\rho\otimes A'$ lies in the essential image of the Fontaine--Laffaille functor $\textbf{G}$.  We also call a continuous finite-dimensional $G_\Sigma$-representation $V$ over $\Q_p$ Fontaine--Laffaille if, for all $v|p$, it is crystalline and $\Fil^0 D = D$ and $\Fil^{p-1} D = (0)$ for the filtered vector space $D=(B_{crys}\otimes_{\Q_p} V)^{D_v}$ defined by Fontaine.

\subsection{Selmer groups}
If $L$ over $K$ is a Galois extension and $M$ is a topological $\Gal(L/K)$-module, we denote by $H^1(\Gal(L/K),M)$ the continuous cohomology group.  If $L$ is a separable algebraic closure of $K$ then we write
$H^1(K,M)$ for $H^1(G_K, M)$.

Let $K$ denote an unramified extension of $\Q_p$.  For $V$ a finite-dimensional $\Q_p$-vector space with a continuous $G_K$ action, we put, following \cite[3.7.2]{bloch1990functions},
$$
H^1_f (K,V) = \ker\left(H^1 (K, V) \to H^1(K,B_{crys}\otimes V) \right).
$$
Let $T$ be a $G_K$-stable $\mathcal{O}$-lattice in an $E$-vector space $V$.  With the natural maps
$$
H^1(F_v, T) \to H^1(F_v, V) \to H^1(F_v, V/T),
$$
we set $H^1_f (F_v, T) \subset H^1 (F_v, T)$ to be the inverse image of $H^1_f(F_v, V)$, and we set $H^1_f (F_v, V/T) \subset H^1 (F_v, V/T)$ to be the image of $H^1_f(F_v, V)$.

For a $p$-adic $G_\Sigma$-module $M$ (finitely generated or finite corank over $\mathcal{O}$; see Section 5 of \cite{berger2013deformation} for details), we define the global Selmer group $H^1_\Sigma (F,M)$ as a subgroup of $H^1(G_\Sigma, M)$ consisting of cohomology classes that are crystalline at all $v|p$:
$$
H^1_\Sigma (F,M) = \ker \left( H^1 (G_\Sigma, M) \to \prod_{v|p}H^1(F_v,M)/H^1_f(F_v, M)   \right) .
$$

The Fontaine--Laffaille theory is used to define the local condition when $M$ is a $p$-adic $G_K$-module of finite cardinality:  If $K$ denotes an unramified extension of $\Q_p$, and if $M$ is in the essential image of \textbf{G}, we define $H^1_f(K, M)$ as the image of $\Ext^1_{MF_\mathcal{O}} (1_{FD}, D)$ in $H^1(K,M) \cong \Ext^1_{\mathcal{O}[G_K]}(1,M)$ where $\mathbf{G} (D) = M$ and $1_{FD}$ is the unit filtered Dieudonn\'e module defined in Lemma 4.4 of \cite{bloch1990functions} (also see Section 5 of \cite{berger2013deformation}).

\section{Reducibility Ideal}\label{sec:reducibility}
Let $R$ be a local Henselian Noetherian commutative ring which is reduced with maximal ideal $m_R$ and residue field $\F$.  Let $G$ be a group and let $\rho : R[G] \to M_n(R)$ be a morphism of $R$-algebras.  Set $T=\tr \rho : R[G] \to R$.  We assume that $n!$ is invertible in $R$ and assume
$$
\rho^{} = \begin{bmatrix}
\tau_1 & * \\
0 & \tau_2
\end{bmatrix} \quad \mod m_R
$$
is a non-split extension of $\tau_2$ by $\tau_1$ for two non-isomorphic absolutely irreducible representations $\tau_i$  over $\F$ of dimension $n_i$, $i=1,2$.

\begin{mydef}[Section 1.2.4 in \cite{bellaiche2009families}]
Let $S$ be an $R$-algebra and let $T:S\to R$ be a pseudocharacter (for a definition of pseudocharacter see Section 1.2.1 of \cite{bellaiche2009families}).  The \textit{kernel} of $T$ is the two-sided ideal of $S$ defined by
$$
\ker T  = \left\{x \in S : \forall y \in S, T(xy)=0 \right\}.
$$
When $S \to Q$ is a surjective morphism of $R$-algebras whose kernel is included in $\ker T$, then $T$ factors uniquely as a pseudocharacter $T_Q : Q \to R$, which will often be denoted by $T$ (as in \cite[Sec. 1.2.4]{bellaiche2009families}).
\end{mydef}

Let $S^T$ be $R[G]/\ker T$ and let $S^\rho$ be $R[G]/\ker \rho$.  Both $S^T$ and $S^\rho$ are Cayley-Hamilton quotients of $R[G]$ (see \cite{bellaiche2009families} Sec. 1.2.5).  There is a canonical $R$-algebra map $\varphi:S^\rho \to S^T$.  Write $\Tot(R)$ for the total fraction ring of $R$.

\begin{mydef}[Definition 1.3.1 in \cite{bellaiche2009families}]\label{def_gma}
Let $S$ be an $R$-algebra.  Then $S$ is a \textit{Generalized Matrix Algebra (GMA)} of type $(n_1, n_2)$ if $S$ is equipped with:
\begin{enumerate}
\item a pair of orthogonal idempotents $e_1, e_2$ of sum 1,
\item for each $i$, an $R$-algebra isomorphism $\psi_i: e_i S e_i \to M_{n_i}(R)$
\end{enumerate}
such that the trace map $T:S\to R$, defined $T(x)=\sum^2_{i=1}\tr(\psi_i (e_i xe_i))$, satisfies $T(xy)=T(yx)$ for all $x,y \in S$.  We call $\mathcal{E} = \left\{ e_i, \psi_i; i = 1,2\right\}$ the \textit{data of idempotents} of $S$.
\end{mydef}

\begin{mydef}[Definition 1.3.6 in \cite{bellaiche2009families}]\label{def_adapted}
Let $S$ be a GMA of type $(n_1, n_2)$ with data of idempotents $\mathcal{E} = \left\{ e_i, \psi_i; i = 1,2\right\}$.  Let $B$ be a commutative $R$-algebra.  Then a morphism of $R$-algebras $\rho: S \to M_n(B)$ is called \textit{adapted to $\mathcal{E}$} if its restriction $\rho|_{e_1 S e_1 \oplus e_2 S e_2}$ equals the composite of $\psi_1 \oplus \psi_2$ with the diagonal map $M_{n_1}(B) \oplus M_{n_2}(B) \to M_n(B)$ where $n=n_1 + n_2$.
\end{mydef}

\begin{mythm}\label{thm_data}
\hspace{2em}
\begin{enumerate}
\item There is a data of idempotents $\mathcal{E}^T = \left\{ e^T_i, \psi^T_i; i = 1,2\right\}$ on $S^T$ for which $S^T$ is a GMA.  The data $\mathcal{E}^T$ defines $R$-submodules $\mathcal{A}^T_{i,j}$ of $S^T$ that satisfy
$$
\mathcal{A}^T_{i,j} \mathcal{A}^T_{j,k}\subset \mathcal{A}^T_{i,k}, \quad T:\mathcal{A}^T_{i,i}\overset{\sim}\to R, \quad i,j,k\in \left\{1,2\right\}.
$$
For $i\neq j$,
$$
T(\mathcal{A}^T_{i,j} \mathcal{A}^T_{j,i})\subset m_R.
$$
One has
$$
S^T \cong
\begin{bmatrix} 
M_{n_1}(\mathcal{A}^T_{1,1}) & M_{n_1,n_2}(\mathcal{A}^T_{1,2})\\
M_{n_2,n_1}(\mathcal{A}^T_{2,1}) & M_{n_2}(\mathcal{A}^T_{2,2})\\
\end{bmatrix}
$$
is an isomorphism of $R$-algebras.

\item For $\mathcal{E}^T$ as in (1),
$$
S^T \cong
\begin{bmatrix} 
M_{n_1}(A^T_{1,1}) & M_{n_1,n_2}(A^T_{1,2})\\
M_{n_2,n_1}(A^T_{2,1}) & M_{n_2}(A^T_{2,2})\\
\end{bmatrix}
\subset M_n(\Tot(R))
$$
is an isomorphism of $R$-algebras where $A^T_{i,j}$ are fractional ideals of $R$ that satisfy
$$
A^T_{i,j} A^T_{j,k}\subset A^T_{i,k}, \quad A^T_{i,i} = R, \quad i,j,k\in \left\{1,2\right\}.
$$
For $i\neq j$,
$$
A^T_{i,j} A^T_{j,i} \subset m_R .
$$

\item There is a data of idempotents $\mathcal{E}^\rho=\left\{ e^\rho_i, \psi^\rho_i; i = 1,2\right\}$ on $S^\rho$ for which $S^\rho$ is a GMA.  The data $\mathcal{E}^\rho$ defines $R$-submodules $\mathcal{A}^\rho_{i,j}$ of $S^\rho$ that satisfy
$$
\mathcal{A}^\rho_{i,j} \mathcal{A}^\rho_{j,k}\subset \mathcal{A}^\rho_{i,k}, \quad T:\mathcal{A}^\rho_{i,i}\overset{\sim}\to R, \quad i,j,k\in \left\{1,2\right\}.
$$
For $i\neq j$,
$$
T(\mathcal{A}^\rho_{i,j} \mathcal{A}^\rho_{j,i})\subset m_R.
$$
One has
$$
S^\rho \cong
\begin{bmatrix} 
M_{n_1}(\mathcal{A}^\rho_{1,1}) & M_{n_1,n_2}(\mathcal{A}^\rho_{1,2})\\
M_{n_2,n_1}(\mathcal{A}^\rho_{2,1}) & M_{n_2}(\mathcal{A}^\rho_{2,2})\\
\end{bmatrix}
$$
is an isomorphism of $R$-algebras.

\item For $\mathcal{E}^\rho$ as in (3),
$$
S^\rho \cong
\begin{bmatrix} 
M_{n_1}(A^\rho_{1,1}) & M_{n_1,n_2}(A^\rho_{1,2})\\
M_{n_2,n_1}(A^\rho_{2,1}) & M_{n_2}(A^\rho_{2,2})\\
\end{bmatrix}
\subset M_n(R)
$$
is an isomorphism of $R$-algebras where $A^\rho_{i,j}$ are ideals of $R$ that satisfy
$$
A^\rho_{i,j} A^\rho_{j,k}\subset A^\rho_{i,k}, \quad A^\rho_{i,i}=R, \quad i,j,k\in \left\{1,2\right\}.
$$
For $i\neq j$,
$$
A^\rho_{i,j} A^\rho_{j,i} \subset m_R.
$$

\item The data $\mathcal{E}^T$ and $\mathcal{E}^\rho$ can be chosen so that $\varphi:S^\rho \to S^T$ satisfies
$$
\varphi(\mathcal{A}_{i,j}^\rho)=\mathcal{A}_{i,j}^T.
$$

\end{enumerate}
\proof
For (1) and (3) this is Theorem 1.4.4(i) of \cite{bellaiche2009families}, and item (2) follows from Theorem 1.4.4(ii) \cite{bellaiche2009families} because for us $R$ is reduced.

For (4), one can conjugate $\rho$ by an invertible matrix $P$ with values in $R$ to get $\rho$ adapted to $\mathcal{E}^\rho$ from (3) by \cite{bellaiche2009families} Lemma 1.3.7 (we can take $P\in GL_n(R)$ because every finite type projective $R$-module is free).  By \cite{bellaiche2009families} Proposition 1.3.8
$$
S^\rho \cong \rho(R[G])=
\begin{bmatrix} 
M_{n_1}(A^\rho_{1,1}) & M_{n_1,n_2}(A^\rho_{1,2})\\
M_{n_2,n_1}(A^\rho_{2,1}) & M_{n_2}(A^\rho_{2,2})\\
\end{bmatrix}
$$
is a GMA associated to the ideals $A^\rho_{i,j}$.

Item (5) is done as in Lemma 2.5 of \cite{berger2013deformation}.
\qed
\end{mythm}

\begin{mydef}[Definition 1.5.2 in \cite{bellaiche2009families}]
The \textit{ideal of reducibility} of $T$ is the ideal $I$ of $R$ such that for all proper ideals $J$ of $R$, one has $I\subseteq J$ if and only if there exist pseudocharacters $T_1,T_2: R[G]\otimes R/J \to R/J$ such that
\begin{enumerate}
\item $T\otimes R/J = T_1 + T_2$, and
\item $T_i\otimes \F = \tr \: \tau_i$, $\:$ for $i=1,2$.
\end{enumerate}
\end{mydef}

\begin{myprop}[Proposition 1.5.1 in \cite{bellaiche2009families}]\label{prop_image_ideal}
The ideal of reducibility $I$ exists.  For $? \in \left\{ \rho, T\right\}$ and $S^?$ with data of idempotents $\mathcal{E}^?$ as in Theorem \ref{thm_data},
$$
I = T(\mathcal{A}^?_{1,2}\mathcal{A}^?_{2,1}).
$$
\proof
The theorem and proof are \cite{bellaiche2009families} Proposition 1.5.1 with the partition $\mathcal{P} = \left\{ \left\{1 \right\}, \left\{2 \right\} \right\}$.  We only note that the given formula for $I$ does not depend on choice of Cayley-Hamilton quotient of $R[G]$ such as $S^\rho$ because $\varphi$ satisfies $T\circ \varphi = T$.  We mean the pseudocharacter $T:R[G]\to R$ induces $T_\rho: S^\rho \to R$ and $T_T:S^T\to R$.  Then $T_T \circ \varphi = T_\rho$ holds and
\begin{align*}
T_\rho (\mathcal{A}_{1,2}^\rho \mathcal{A}_{2,1}^\rho) &= T_T(\varphi (\mathcal{A}_{1,2}^\rho \mathcal{A}_{2,1}^\rho)) \\ 
&= T_T (\mathcal{A}_{1,2}^T \mathcal{A}_{2,1}^T).
\end{align*}
\qed
\end{myprop}

\begin{myprop}
Let $S^\rho$ and $\mathcal{E}^\rho$ be as in Theorem \ref{thm_data}.  Then
$$
I={A}^\rho_{1,2}{A}^\rho_{2,1}.
$$
\proof
As in the proof of Theorem \ref{thm_data}(4), $\rho:R[G]\to M_n(R)$ factors through $S^\rho$ and since $R$ is local (and so every finite type projective $R$-module is free) there is $P \in GL_n(R)$ such that $P\rho P^{-1}:S^\rho \to M_n(R)$ is adapted to $\mathcal{E}^\rho$ by Lemma 1.3.7 of \cite{bellaiche2009families}.  Let $E_i \in e_i^\rho S^\rho e_i^\rho$ for $i=1,2$ be the unique element such that $\psi_i^\rho (E_i)$ is the elementary matrix of $M_{n_i}(R)$ with entry 1 at row 1 and column 1.  By definition of the $E_i$'s and adaptedness, $P\rho P^{-1}(E_i S^\rho E_j)$ is the $R$-module of matrices with coefficients 0 everywhere except for one entry.  The row of the nonzero entry is 1 if $i=1$ and it is $n_1 +1$ if $i=2$.  The column of the nonzero entry is 1 if $j=1$ and it is $n_1 +1$ if $j=2$.  In this way we get $R$-linear maps $f_{i,j}:E_i S^\rho E_j \to R$ for $i=1,2$ whose images are the ideals $A_{i,j}^\rho$ of $R$ appearing in the proof of Theorem \ref{thm_data}(4).

We note that $\mathcal{A}_{i,j}^\rho=E_i S^\rho E_j$.  The $f_{i,j}$ for $i=1,2$ fit into the following commutative diagram by \cite[Proposition 1.3.8]{bellaiche2009families} (cf. \cite[Corollary 6.5]{berger2020deformations}).
$$
\begin{tikzcd}
\mathcal{A}_{1,2}^\rho \mathcal{A}_{2,1}^\rho \arrow[hook]{r}{} \arrow{d}[swap]{f_{1,2}\otimes f_{2,1}}
               &\mathcal{A}_{1,1}^\rho \arrow{d}{f_{1,1}}\\
A_{1,2}^\rho A_{2,1}^\rho \arrow[hook]{r}{ } &R
\end{tikzcd}
$$

By adaptedness $T(\mathcal{A}_{1,1}^\rho)=\tr \circ P\rho P^{-1}(\mathcal{A}_{1,1}^\rho)$, which equals $f_{1,1}(\mathcal{A}_{1,1}^\rho)$.  Then $f_{1,1}=T$.  With $I=T(\mathcal{A}_{1,2}^\rho \mathcal{A}_{2,1}^\rho)$ from Proposition \ref{prop_image_ideal}, the commutativity of the diagram implies $I=f_{1,1}(\mathcal{A}_{1,2}^\rho\mathcal{A}_{2,1}^\rho) = f_{1,2}\otimes f_{2,1}(\mathcal{A}_{1,2}^\rho \mathcal{A}_{2,1}^\rho) = A_{1,2}^\rho A_{2,1}^\rho$.
\qed
\end{myprop}

\begin{myprop}\label{prop_ideal}
The reducibility ideal $I=A_{2,1}^\rho$.
\proof
Let $\rho'$ be the representation obtained from $\rho$ by factoring through $S^\rho$ and conjugating by some $P \in GL_n(R)$ for adaptedness to $\mathcal{E}^\rho$, the data of idempotents of $S^\rho$ from Theorem \ref{thm_data}(3), using \cite[Proposition 1.3.7]{bellaiche2009families}.  Write the data of idempotents as $e_i$ and $\psi_i$ for $i=1,2$.  For $r\in S^\rho$ write functions $A,B,C,$ and $D$ in
\begin{equation}\label{rho_prime}
\rho'(r) = \begin{bmatrix}
A(r) & B(r) \\
C(r) & D(r)
\end{bmatrix}
\end{equation}
with $A(r)\in M_{n_1}(A^\rho_{1,1})$, $B(r)\in M_{n_1,n_2}(A^\rho_{1,2})$, $C(r)\in M_{n_2,n_1}(A^\rho_{2,1})$, and $D(r)\in M_{n_2}(A^\rho_{2,2})$, where we have applied Theorem \ref{thm_data}(4) to get $A_{i,j}^\rho \subset R$ for $i,j=1,2$.  By adaptedness, for $i=1,2$
$$
\psi_i: e_i S^\rho e_i\overset{\sim}\to M_{n_i}(R)
$$
are lifts of
$$
\tau_i |_{e_i S^\rho e_i}: e_i S^\rho e_i \to M_{n_i}(\F)
$$
and are such that for all $x\in e_i S^\rho e_i$, $T(x)=\tr (\psi_i(x))$.

This implies $\tau_1(r) = A(r) \mod m_R$ and $\tau_2(r) = D(r) \mod m_R$.  Viewing $\rho$ as a representation of $S^\rho$, we have $\rho' \cong \rho$, which implies $\rho' \otimes \F \cong \rho\otimes \F$ for the mod $m_R$ reductions of $\rho'$ and $\rho$.  Therefore, $\rho'\otimes \F$ is a non-split extension of $\tau_2$ by $\tau_1$.  There then exists $r\in R[G]$ such that $\rho' \otimes \F (r) = \begin{bmatrix}
\tau_1(r) & B(r) \\
0 & \tau_2(r)
\end{bmatrix}\mod m_R$ and $B(r)  \neq 0 \mod m_R$.  By Theorem \ref{thm_data}(4),
$$
A_{1,2}^\rho = A_{1,2}^\rho R = A_{1,2}^\rho A_{1,1}^\rho \subset A_{1,1}^\rho=R
$$
and $A_{1,2}^\rho$ is an ideal of $R$.  As $B(r) \in M_{n_1,n_2}(A^\rho_{1,2})$, it must be $A^\rho_{1,2}\not \subset m_R$, which means $A^\rho_{1,2} = R$.  Then the reducibility ideal $I = A^\rho_{1,2}A^\rho_{2,1}=A^\rho_{2,1}$.
\qed
\end{myprop}

\begin{myprop}\label{injective}
There exists an injective $\F$-module homomorphism
$$
\iota : \Hom_R(A^\rho_{2,1},\F) \to \Ext^1_{R[G]/m_R R[G]} (\tau_1, \tau_2) \cong H^1(G, \Hom_\F(\tau_1, \tau_2)).
$$
\proof
We have $I=A_{2,1}^\rho$ by Proposition \ref{prop_ideal}.  By Theorem 1.5.5 of \cite{bellaiche2009families}, putting $J=m_R$, we get the injective map.
\qed
\end{myprop}

Now we establish conditions under which $I$ is a principal ideal.

\begin{myprop}\label{prop_principal}
If $\dim_\F \im \iota \leq 1$ then $I$ is principal.
\proof
Because $\iota$ is injective, $\dim_\F \Hom_R (A^\rho_{2,1},\F)=0$ or $1$.  As $\Hom_R(A^\rho_{2,1},\F ) = \Hom_\F (I/m_R I,\F)$ this space has the same dimension as $I/m_R I$.  By the complete version of Nakayama's lemma, $\dim_\F I/m_R I$ equals the cardinality of a minimal generating set of $I$ as an $R$-module.  If $\dim_\F I/m_R I =0$ then $I=(0)$.  If $\dim_\F I/m_R I=1$ then $I$ has one minimal generator:  $I=(a)$ for a nonzero $a\in R$.  We have shown that $I$ is principal.
\qed
\end{myprop}

\begin{myprop}\label{prop_dvr}
Suppose $I$ is maximal and principal.  Suppose $R$ admits a surjective map to a ring of characteristic zero.  Then $R$ is a discrete valuation ring.
\proof
See also \cite{calegari2006eisenstein} Lemma 3.4.  We know $R/I=\F$ so $I \neq (0)$ as $R$ admits a surjective map to a characteristic zero ring.  Write $I=(a)$ for some $a \in R$.  The generator $a$ is not nilpotent since $R$ is reduced.  We conclude $R$ is a DVR by Chap. 1 Sec. 2, Proposition 2 of \cite{serre1979local}.
\qed
\end{myprop}

\section{Universal deformation ring is a discrete valuation ring}\label{sec:r_dvr}
In this section, we apply our reducibility ideal results to Galois representations.  Let $\F$, $p$, $\Sigma$, $E$, $\mathcal{O}$, and $\varpi$ be as in Section \ref{setup}.  Denote by $\mathcal{C}$ the category of local complete Noetherian $\mathcal{O}$-algebras with residue field $\F$.

Let $\tau_i:G_\Sigma\to GL_{n_i}(\F)$ for $i=1,2$ be absolutely irreducible and non-isomorphic representations.  With $n=n_1 + n_2$ let $\tau:G_\Sigma \to GL_n(\F)$ be a non-semi-simple representation of the form
\begin{equation}\label{eqn_tau}
\tau = \begin{bmatrix}
\tau_1 & *\\
 & \tau_2
\end{bmatrix}.
\end{equation}
Assume that $p\nmid n!$, and that $\tau$ is Fontaine--Laffaille at the primes of $F$ lying over $p$ (see Section \ref{f-l} for the definition; this means $\tau|_{D_v}$ is in the essential image of the Fontaine--Laffaille functor $\mathbf{G}$ for all $v\in \Sigma$ such that $v|p$).  We note that we have used the natural maps $G_{F_v}\to G_F \to G_\Sigma$ to consider $G_\Sigma |_{D_v}$.

Following Mazur an $\mathcal{O}$-deformation of $\tau$ is a pair consisting of $A$ an object of $\mathcal{C}$ and a strict equivalence class of continuous representations $\rho: G_\Sigma \to GL_n(A)$ such that $\tau = \rho \mod m_A$, where $m_A$ is the maximal ideal of $A$.  We denote an $\mathcal{O}$-deformation by a single member of its strict equivalence class, and say it is Fontaine-Laffaille if the representative $\rho$ satisfies $\rho|_{D_v}$ is Fontaine-Laffaille for all $v|p$.

Because $\tau_i$ for $i=1,2$ are absolutely irreducible, they have scalar centralizers.  Consider the deformation problems for $\tau_i$ and $i=1,2$ that for each object $A$ of $\mathcal{C}$ assign the set of strict equivalence classes of representations $\sigma:G_\Sigma \to GL_{n_i}(A)$ that are Fontaine-Laffaille at all $v|p$.  We note that for each $v|p$, the Fontaine--Laffaille deformations satisfy the conditions for a local deformation problem in the sense of Def. 2.2.2 in \cite{clozel2008automorphy} (see also p. 35 in [loc. cit.]).  With scalar centralizers of each, the global deformation problems for each $\tau_i$ are representable by rings in $\mathcal{C}$ following \cite[Proposition 2.2.9]{clozel2008automorphy}.  Denote these by $R_i$ for $i=1,2$.  For places in $\Sigma$ that are not above $p$, we impose no condition.

\begin{myassu}\label{assumptions}
Assume
\begin{enumerate}
\item $\dim_\F H^1_\Sigma(F, \Hom_\F(\tau_2, \tau_1)) =1$, and
\item $R_1=R_2=\mathcal{O}$, which implies that for $i=1,2$ there exists a unique deformation $\tilde \tau_i:G_\Sigma \to GL_{n_i}(\mathcal{O})$ of $\tau_i$ to $\mathcal{O}$.
\end{enumerate}
\end{myassu}

Consider the deformation problem for $\tau$ where to each object $A$ of $\mathcal{C}$ we assign the set of strict equivalence classes of representations $\rho:G_\Sigma \to GL_n(A)$ that are Fontaine-Laffaille.  As $\tau$ is not semi-simple, it has scalar centralizer (cf. \cite[Sec. 6.2]{berger2013deformation}), so there exists a universal deformation ring (by the same argument as for each $\tau_i$), which we denote by $R$, and a universal deformation $\rho^{univ}:G_\Sigma \to GL_n(R)$.  There is a one-to-one correspondence between the set of $\mathcal{O}$-algebra maps $R\to A$ inducing identity on $\F$ and the set of deformations $\rho: G_\Sigma \to GL_n(A)$ of $\tau$.

We write $R^{red}$ for the quotient of $R$ by its nilradical and $\rho^{red}$ for the corresponding universal deformation obtained by composing $\rho^{univ}$ with the natural quotient map $R \twoheadrightarrow R^{red}$.  We view $\rho^{red}$ as a homomorphism of $R$-algebras $\rho^{red}:R^{red}[G_\Sigma]\to M_n(R^{red})$.

Let $T$ denote $\tr\circ \rho^{red}:R^{red}[G_\Sigma]\to R^{red}$ and let $I$ be the ideal of reducibility of $T$.  Let $S$ denote the $R^{red}$-algebra $R^{red}[G_\Sigma]/\ker \rho^{red}$.

By Theorem \ref{thm_data}(4), $S$ has data of idempotents $\mathcal{E} = \left\{ e_i, \psi_i ; i=1,2\right\}$.  The representation $\sigma:S\to M_n(R^{red})$ induced by $\rho^{red}$ can be conjugated by some $P\in GL_n(R^{red})$ to get $P\sigma P^{-1}$ adapted to $\mathcal{E}$.  Denote by $\rho'$ the adapted representation $P\sigma P^{-1}:S\to M_n(R^{red})$.

\begin{myprop}\label{prop_selmer}
There is an injective $\F$-module homomorphism
$$
\iota : \Hom_{R^{red}}(I,\F) \to \Ext^1_{R^{red}[G_\Sigma]/m_{R^{red}} R^{red}[G_\Sigma]} (\tau_1, \tau_2) \cong H^1(G_\Sigma, \Hom_\F(\tau_1, \tau_2)),
$$
and the image of $\iota$ is contained in $H^1_\Sigma (F, \Hom(\tau_1, \tau_2))$.
\proof
The existence and injectivity of $\iota$ follows from Proposition \ref{injective}.  It remains to prove the image lands in the Selmer group.  The GMA $\rho'(S)$ is a standard GMA of type $(n_1,n_2)$ associated to the $R^{red}$-submodules $A^\rho_{i,j}$ of $R^{red}$ by Theorem \ref{thm_data}(4).  Denote the space of $\rho'$ by $V$, which is a free $R^{red}$-module of rank $n$.  The image $\rho'(S)$ consists of matrices that act on $V$.  Define a basis for $V$ by $v_1={}^t[1,0,0,...,0], ..., v_n={}^t[0,...,0,1]$, which are vectors with 1 at one entry and zero elsewhere (if $x$ is a matrix, we denote its transpose by ${}^t x$).  We have $\rho'(s)v_i \in V$ for any $s\in S$, $i=1,...,n$, and $V$ is an $S$-module.  Since $\rho^{red}$ and hence also $\rho'$ is Fontaine--Laffaille, we have that as an $S$-module $V$ is Fontaine--Laffaille.

We use notation $E_i$ as in \cite{bellaiche2009families} Section 1.3.1:  the data $\mathcal{E}$ gives a unique element $E_i \in e_i S e_i$ such that $\psi_i (E_i)$ is the elementary matrix of $M_{n_i}(R^{red})$ with unique nonzero coefficient at row 1 and column 1.  Define $M_i:=SE_i$ for $i=1,2$, which is an $R^{red}$-module and a projective $S$-module.  By $\rho'$ being adapted to $\mathcal{E}$, $\rho'|_{e_1 Se_1 \oplus e_2 S e_2}(E_i)$ equals the image of $\psi_i(E_i)$ under the diagonal map to $M_n(R^{red})$.  For each $s\in S$, $\rho'(s)\rho'(E_i)$ is a matrix of zeroes except one column, corresponding to $E_i$ for $i=1,2$.  By definition of $E_i$, the non-zero column is column 1 (resp. column $n_1+1$) if $i=1$ (resp. $i=2$).  Thus identify rank one matrix $sE_i$ with the column vector, which equals $\rho '(s)v_1$ (resp. $\rho '(s)v_{n_1+1}$) for $i=1$ (resp. $i=2$).  In either case we get $M_i \subset V$.

In particular, $M_1$ is an $S$-submodule of $V$.  The category of Fontaine-Laffaille representations is closed under taking subobjects, quotients, and finite direct sums (\cite[Sec. 2.4.1]{clozel2008automorphy}, \cite[Sec. 1.1.2]{diamond2004tamagawa}).  Because $V$ is Fontaine-Laffaille, we have that $M_1$ is Fontaine-Laffaille.  By Theorem 1.5.6(1) of \cite{bellaiche2009families}, the image of $\iota$ consists precisely of the $S/m_{R^{red}}S$-extensions of $\tau_1$ by $\tau_2$.  By Theorem 1.5.6(2) of \cite{bellaiche2009families}, any $S/m_{R^{red}}S$-extension of $\tau_1$ by $\tau_2$ is a quotient of $M_1/m_{R^{red}} M_1\oplus \tau_2$ by an $S$-submodule.  The quotient $M_1/m_{R^{red}} M_1$ is Fontaine-Laffaille since $M_1$ is Fontaine-Laffaille, and then $M_1/m_{R^{red}} M_1\oplus \tau_2$ is Fontaine-Laffaile as $\tau_2$ is Fontaine-Laffaille.  So we conclude the image of $\iota$ consists of Fontaine-Laffaille extensions.
\qed
\end{myprop}

\begin{mycor}\label{cor_principal}
If $\dim_\F H^1_\Sigma (F, \Hom_\F(\tau_1, \tau_2))\leq 1$ then $I$ is principal.
\proof
Combine Proposition \ref{prop_selmer} with Proposition \ref{prop_principal}.
\qed
\end{mycor}

We will make use of the following result of Berger and Klosin.  They show the $\mathcal{O}$-algebra structure map $\mathcal{O}\to R/I'$ is surjective with Assumption \ref{assumptions} (1) and (2), where $I'$ is the ideal of reducibility of $\tr \rho^{univ}$, and then using a bound on a Selmer group they show $R/I'$ is cyclic.  They then obtain a corresponding statement for $R^{red}/I$.  We take this bound to be sufficiently strong to get maximality of $I$.

Under the assumption that $R_1=R_2=\mathcal{O}$, such as in Assumption \ref{assumptions} (2), we denote by $\tilde{\tau_i}$, $i=1,2$ the unique Fontaine--Laffaille deformation $\tilde{\tau_i}:G_\Sigma \to GL_{n_i}(\mathcal{O})$ with $\tau_i = \tilde{\tau_i} \mod \varpi$.

\begin{myprop}[\cite{berger2013deformation} Proposition 7.14]\label{prop_maximal}
Assume Assumption \ref{assumptions} (1) and (2).  Assume $\# H^1_\Sigma(F,\Hom_\mathcal{O}(\tilde\tau_2, \tilde\tau_1)\otimes E/\mathcal{O}) \leq \# \mathcal{O}/\varpi$.  Then $I$ is maximal.
\end{myprop}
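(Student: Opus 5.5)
The plan is to show $R^{red}/I \cong \F$. Three steps: the structure map $\mathcal{O}\to R^{red}/I$ is surjective; $\varpi \in I$; and $I$ is proper, which holds by definition of the ideal of reducibility, since $I\subset m_{R^{red}}$.

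\emph{Step 1: surjectivity.} By the definition of $I$, there are pseudocharacters $T_1,T_2$ on $R^{red}[G_\Sigma]\otimes R^{red}/I$ with $T\otimes R^{red}/I = T_1+T_2$ and $T_i\otimes\F=\tr\tau_i$. Since $\tau_i$ is absolutely irreducible and $R^{red}/I$ is Henselian, each $T_i$ is the trace of a genuine representation $\sigma_i$ deforming $\tau_i$, by the Nyssen--Rouquier theorem. Concretely, in the adapted GMA $\rho'(S)$ the representations $\sigma_i$ are the diagonal blocks $A,D$ of $\rho'\bmod I$. By Proposition \ref{prop_ideal} we have $I=A^\rho_{2,1}$, so $\rho'\bmod I$ is block upper triangular with diagonal blocks $\sigma_1$ and $\sigma_2$.

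Each $\sigma_i$ is Fontaine--Laffaille, being a subquotient of the Fontaine--Laffaille representation $\rho'\otimes R^{red}/I$. Hence $\sigma_i$ comes from a map $R_i=\mathcal{O}\to R^{red}/I$, so $\sigma_i$ is the pushforward of $\tilde\tau_i$. Consequently $T\bmod I$ takes values in the image of $\mathcal{O}$. Since $R^{red}$ is topologically generated over $\mathcal{O}$ by traces, the map $\mathcal{O}\to R^{red}/I$ is surjective. Thus $R^{red}/I\cong\mathcal{O}/\varpi^s$ for some $s\ge 1$, or $R^{red}/I\cong\mathcal{O}$. This is the argument of \cite{berger2013deformation}, where Assumption \ref{assumptions}(1) is used so that the upper-right block over $\F$ is the unique extension class.

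\emph{Step 2: $s\le 1$.} Suppose instead that $R^{red}/I$ surjects onto $\mathcal{O}/\varpi^2$. Pushing $\rho'$ forward gives a Fontaine--Laffaille representation
$$\rho_2=\begin{bmatrix}\tilde\tau_1 & b\\ 0 & \tilde\tau_2\end{bmatrix}\bmod\varpi^2,$$
where $b$ is a cocycle in $\Hom(\tilde\tau_2,\tilde\tau_1)\otimes\mathcal{O}/\varpi^2$ whose reduction mod $\varpi$ is the nonzero class of $\tau$. Let $c$ be the class of $b$ in $H^1_\Sigma(F,\Hom(\tilde\tau_2,\tilde\tau_1)\otimes\varpi^{-2}\mathcal{O}/\mathcal{O})$. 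It lies in the Selmer group because the Fontaine--Laffaille category is closed under subquotients, so $\rho_2$ being Fontaine--Laffaille gives the local condition. The class $c$ has exact order $\varpi^2$: its $\varpi$-multiple is the reduction of $b$, and the class of $\tau$ is nonzero. To see that the image of $c$ in the $E/\mathcal{O}$-coefficients Selmer group $H^1_\Sigma(F,\Hom_\mathcal{O}(\tilde\tau_2,\tilde\tau_1)\otimes E/\mathcal{O})$ stays of order $\varpi^2$, use that $H^0(G_\Sigma,\Hom(\tau_2,\tau_1))=0$, since $\tau_1\not\cong\tau_2$ are irreducible. This makes the maps $H^1(\,\cdot\otimes\varpi^{-k}\mathcal{O}/\mathcal{O})\to H^1(\,\cdot\otimes E/\mathcal{O})$ injective. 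An element of exact order $\varpi^2$ generates a subgroup of order $(\#\F)^2$, which contradicts the hypothesis $\#H^1_\Sigma\le\#\mathcal{O}/\varpi$. Hence $\varpi\in I$ and $R^{red}/I=\F$, so $I$ is maximal. If $I=0$, then $R^{red}/I$ would surject onto $\mathcal{O}/\varpi^2$, which is also excluded.

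\emph{Main obstacle.} The key difficulty is checking that the Fontaine--Laffaille local conditions are compatible. The Selmer condition defined via $H^1_f(F_v,V)$ on $E/\mathcal{O}$-coefficients must agree, on torsion submodules, with the condition defined by $\Ext^1_{MF_\mathcal{O}}$, and finite-level Fontaine--Laffaille extensions must give classes in the image of $H^1_f(F_v,V)$. I would import this from \cite[Section 5]{berger2013deformation}, using the standard fact that for Hodge--Tate weights in $[0,p-2]$ these local conditions agree (cf. \cite{diamond2004tamagawa}). A secondary point is justifying that the $\sigma_i$ are Fontaine--Laffaille, which again follows from closure under subquotients.
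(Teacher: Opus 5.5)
Your proposal is essentially the Berger--Klosin argument that the paper cites and summarizes just before the statement. First, $\mathcal{O}\to R^{red}/I$ is surjective; then a surjection $R^{red}/I\twoheadrightarrow\mathcal{O}/\varpi^2$ would give an upper-triangular Fontaine--Laffaille deformation whose corner defines a Selmer class of exact order $\varpi^2$, contradicting the bound. The only structural variation is that you work directly on $R^{red}$ via $I=A^\rho_{2,1}$, instead of on $R/I'$ followed by the surjection $R/I'\twoheadrightarrow R^{red}/I$.

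You assert without proof that $R^{red}$ is topologically generated by traces, and this is exactly where Assumption (1) does its work. You can bypass it, as Berger--Klosin do, by checking that every $\mathcal{O}$-algebra map $R^{red}/I\to\F[x]/x^2$ is trivial. The diagonal blocks are $\tau_i$ because $R_i=\mathcal{O}$. The corner $b_0+xb_1$ has $b_1\in\F b_0$ modulo coboundaries by (1), so conjugation by a matrix congruent to $1$ mod $x$ removes it. Nakayama then gives surjectivity.
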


We conclude this section with a theorem identifying conditions when the universal deformation ring is a discrete valuation ring.  We collect the assumptions of this section in the statement.

\begin{mythm}\label{main_thm}
Let $\tau_i:G_\Sigma\to GL_{n_i}(\F)$ for $i=1,2$ be absolutely irreducible and non-isomorphic.  With $n=n_1 + n_2$ let $\tau:G_\Sigma \to GL_n(\F)$ be a non-semi-simple representation of the form
\begin{equation*}
\tau = \begin{bmatrix}
\tau_1 & *\\
 & \tau_2
\end{bmatrix}.
\end{equation*}
Assume that $p \nmid n!$, and that $\tau$ is Fontaine-Laffaille at the primes of $F$ lying over $p$.  Let $(R,\rho^{univ})$ be the universal deformation representing the deformation problem for $\tau$ which assigns the set of strict equivalence classes of Fontaine--Laffaille at $v|p$ representations.  Let $R^{red}$ be the quotient of $R$ by its nilradical and let $\rho^{red}$ be the composition of $\rho^{univ}$ with $R \twoheadrightarrow R^{red}$.  Assume $R_1=R_2=\mathcal{O}$, universal deformation rings classifying deformations of $\tau_i$ that are Fontaine-Laffaille at $v|p$.
Also assume
\begin{enumerate}
\item $\# H^1_\Sigma(F,\Hom_\mathcal{O}(\tilde\tau_2, \tilde\tau_1)\otimes E/\mathcal{O}) \leq \# \mathcal{O}/\varpi$,
\item $\dim_\F H^1_\Sigma(F, \Hom_\F(\tau_1, \tau_2)) \leq 1$, and
\item $R^{red}$ admits a surjective map to a ring of characteristic zero.
\end{enumerate}
Then $R^{red}$ is a discrete valuation ring.
\proof
With $p\nmid n!$, we have that $n!$ is invertible in $\F$, which is required in the definition of pseudocharacter (\cite[Sec. 1.2.1]{bellaiche2009families}).  Let $I$ be the ideal of reducibility of $T=\tr \rho^{red}$.  Using (1) and by Proposition \ref{prop_maximal}, $I$ is maximal in $R^{red}$.  We apply Proposition \ref{prop_selmer} to obtain the injective map $\Hom_{R^{red}}(I,\F) \to H^1_\Sigma(F, \Hom_\F(\tau_1, \tau_2))$, and then use the bound (2) to meet the hypothesis of Corollary \ref{cor_principal}.  This gives us that $I$ is principal.  By Lemma \ref{prop_dvr}, $R^{red}$ is a DVR.
\qed
\end{mythm}

\section{Modularity of 2-dimensional representations of $\Q$ with classical modular forms}\label{sec:2dim}
In this section and the next we show how our results can be used to prove modularity of some Galois representations.  For a congruence subgroup $\Gamma$ of $SL_2(\Z)$ and integer $k\geq 0$, denote the $\C$-space of modular forms of weight $k$ and level $\Gamma$ by $M_k(\Gamma)$.  Denote by $S_k(\Gamma)$ the subspace inside $M_k(\Gamma)$ of cusp forms of weight $k$ and level $\Gamma$.  For each Dirichlet character $\chi$ modulo $N$, we denote by $M_k(N, \chi)$ (resp. $S_k(N,\chi)$) the space of all modular forms (resp. cusp forms) of weight $k$, level $N$, and character $\chi$.

Let $k\geq 4$ be an even integer and let $p>k$ be a prime.  Let
$$
E_k(z) = -\frac{B_k}{2k} + \sum^\infty_{n=1}\sigma_{k-1}(n)q^n, \quad q=e^{2\pi iz},
$$
where $B_k$ is the $k$th Bernoulli number and $\sigma_{k-1}(n)=\sum_{d|n; d>0}d^{k-1}$, be the Eisenstein series of weight $k$ normalized so that its first Fourier coefficient is 1.

Let $\epsilon: G_\Q\to GL_1(\Q_p)$ be the $p$-adic cyclotomic character, and let $\overline \epsilon:G_\Q \to GL_1(\F_p)$ be the mod $p$ cyclotomic character.  Define the Galois representation $\rho_{E_k}:=1\oplus \epsilon ^{k-1}:G_\Q \to GL_2(\Q_p)$.

\begin{mydef}
Let $f=\sum^\infty_{n=0}a_n(f)q^n$ and $g=\sum^\infty_{n=0}a_n(g)q^n$ be two modular forms (each of any level $N$) with Fourier coefficients in the ring of integers $\mathcal{O}_K$ of a number field $K$.  Let $\mathfrak{p}\subset \mathcal{O}_K$ be a prime ideal.  We say $f$ and $g$ are \textit{congruent modulo $\mathfrak{p}$} and write $f \equiv g \mod \mathfrak{p}$ if $a_n(f)-a_n(g) \equiv 0 \mod \mathfrak{p}$ for all $n>0$.
\end{mydef}

Let $\Sigma$ be a finite set of rational primes of $\Q$ containing $p$.  Let $G_\Sigma$, $\F$, $E$, $\mathcal{O}$, and $\varpi$ be as in Section \ref{setup}. For an eigenform $g \in S_k(N, \psi)$, denote by $\rho_g$ the irreducible Galois representation attached to $g$.  Denote the $G_\Sigma$-modules $\Hom_\mathcal{O}(\epsilon^{k-1}, 1)$, $\Hom_\mathcal{\F}(\overline{\epsilon}^{k-1}, 1)$, $\Hom_\mathcal{O}(1,\epsilon^{k-1})$, and $\Hom_\mathcal{\F}(1,\overline{\epsilon}^{k-1})$ by $\epsilon^{1-k}$, $\overline\epsilon^{1-k}$, $\epsilon^{k-1}$, and $\overline\epsilon^{k-1}$, respectively.  

\subsection{Residual representations}
Suppose $G$ is $G_\Q$ or $G_\Sigma$ (or the analogous quotients of $G_K$ in Section \ref{sec:4dim}), and suppose $\rho: G \to GL_n(E)$ is a continuous representation.  We denote by $\overline \rho$ the representation into $GL_n(\F)$ obtained by reducing $\rho$ modulo $\varpi$ with respect to some lattice in $E^n$.  We note that the isomorphism class of $\overline \rho$ depends on the choice of lattice, but the semi-simplification denoted $\overline \rho ^{ss}$ does not.

\subsection{Modularity theorem}
\begin{myprop}
Assume each $\ell \in \Sigma - \left\{ p\right\}$ satisfies $\ell \not \equiv \textnormal{1 (mod $p$)}$.  Then the Fontaine--Laffaille universal deformation rings classifying deformations of $1$, $\overline{\epsilon}^{k-1}$ equal $\mathcal{O}$.
\proof
We follow the argument in the proof of \cite[Lemma 7.2]{berger2020deformations}.  It suffices to show that the only infinitesimal deformations satisfying the Fontaine--Laffaille property of $1$ and $\overline{\epsilon}^{k-1}$ to $\F[X]/X^2$ are the trivial ones.  Suppose $\overline{\epsilon}^{k-1} + \alpha X$ is a deformation of $\overline{\epsilon}^{k-1}$ to $\F[X]/X^2$.  Since $\overline{\epsilon}^{1-k}$ is Fontaine--Laffaille, we have that $1+\overline{\epsilon}^{1-k}\alpha X$ is Fontaine--Laffaille, where $\overline{\epsilon}^{1-k}\alpha$ is a homomorphism from $G_\Sigma$ to the additive group $\F$.  Let $\ell\in \Sigma - \left\{ p \right\}$.  Since $\ell \not \equiv \textnormal{1 (mod $p$)}$, by local class field theory $\overline{\epsilon}^{1-k}\alpha:G_\Sigma \to \F^+$ is unramified at $\ell$.  This means $1+\overline{\epsilon}^{1-k}\alpha X : G_\Sigma \to GL_1(\F[X]/X^2)$ can only be ramified at $p$.  We apply Lemma 9.6 in \cite{berger2013deformation} to $1+\overline{\epsilon}^{1-k}\alpha X$, replacing the imaginary quadratic field there with $\Q$ for us (as done in Lemma 7.2 in \cite{berger2020deformations}) and crystallinity there with Fontaine--Laffaille for us (cf. end of Section 5 in \cite{berger2013deformation}).  Then we have that $1+\overline{\epsilon}^{1-k}\alpha X$ is unramified at $p$, and thus unramified everywhere.  With $p\nmid \# Cl_\Q$, it follows that $\alpha=0$, so $\overline{\epsilon}^{k-1}$ only has trivial infinitesimal Fontaine--Laffaille deformations.  A similar argument shows the only deformation of 1 to $\F[X]/X^2$ is the trivial one.
\qed
\end{myprop}

\begin{mythm}\label{thm_2dim}
Let $k\geq 4$ be an even integer and let $p>k$ be a prime.  Let $\Sigma$, $E$, $\mathcal{O}$, $\varpi$, and $\F$ be as above.  Assume that
\begin{enumerate}[(i)]
\item if $\ell \in \Sigma - \left\{ p\right\}$ then $\ell \not \equiv \textnormal{1 (mod $p$)}$,
\item $\# H^1_\Sigma (\Q, \epsilon^{1-k}\otimes E/\mathcal{O}) \leq \# \F$, and
\item $\dim_\F H^1_\Sigma (\Q, \overline\epsilon^{k-1}) \leq 1$.
\end{enumerate}
Suppose $\rho: G_\Sigma \to GL_2(E)$ is an irreducible representation that is Fontaine--Laffaille at $p$, such that $\overline\rho^{ss}\cong 1\oplus \overline{\epsilon}^{k-1}$.  Then one has the following.
\begin{enumerate}
\item If $p|B_k$, then $\rho\cong \rho_{g}$ where $g\in S_k(SL_2(\Z))$ (i.e. $\rho$ is modular).
\item If $p>k+1$ and if there is $\ell \in \Sigma - \left\{ p\right\}$ such that $\ell^k \equiv \textnormal{1 (mod $p$)}$, then $\rho\cong \rho_{g}$ where $g \in S_k(\Gamma_0(\ell))$ is a newform (i.e. $\rho$ is modular).
\end{enumerate}

\proof
In case (1), the divisibility of $B_k$ by $p$ implies there exists a normalized eigenform $g_1 \in S_k(SL_2(\Z))$ that is a newform such that $g_1\equiv E_k \mod \mathfrak{p_1}$ where $\mathfrak{p_1}$ is a prime of a number field with $\mathfrak{p_1}|p$ (see for example \cite[Proposition 1]{ghate2002introduction}).  In case (2), the divisibility $p\mathrel | \ell^k -1$ implies there exists a newform $g_2 \in S_k(\Gamma_0(\ell))$ such that $g_2\equiv E_k \mod \mathfrak{p_2}$ where $\mathfrak{p_2}$ is a prime of a number field with $\mathfrak{p_2}|p$ (by \cite[Theorem 1]{billerey2016modularity}).  The remainder of the proof is the same for $g_i$, $i=1,2$, so we denote either one by $g$.  By a result of Ribet \cite[Proposition 2.1]{ribet1976modular}, there exists a $G_\Sigma$-stable $\mathcal{O}$-lattice in the space of $\rho_g$ such that
\begin{equation}\label{eqn_g}
\overline{\rho_g} = \begin{bmatrix}
1 & *\\
 & \overline{\epsilon}^{k-1}
\end{bmatrix}
\end{equation}
and this extension is not split.  The fact that the level of $g$ is prime to $p$, together with $2\leq k < p$, implies $\rho_{g}$ is Fontaine-Laffaille at $p$ (see e.g. \cite[Sec. 1]{dousmanis2010}).  Because $\rho_g$ with a lattice as in equation (\ref{eqn_g}) is Fontaine--Laffaille, and the essential image of the Fontaine-Laffaille functor is closed under quotients (\cite[Sec. 2.4.1]{clozel2008automorphy}), $\overline {\rho_{g}}$ is Fontaine-Laffaille at $p$.  The representation $\overline {\rho_{g}}$ has scalar centralizer (cf. \cite[Sec. 6.2]{berger2013deformation}).

Let $\mathcal{C}$ denote the category of local complete Noetherian $\mathcal{O}$-algebras with residue field $\F$.  The deformation problem for $\overline {\rho_{g}}$ where for each object $A$ of $\mathcal{C}$ we assign the set of strict equivalence classes of representations $\rho:G_\Sigma \to GL_2(A)$ that are Fontaine-Laffaille at $p$ is representable by a ring $R$ and there is a universal deformation $\rho^{univ}:G_\Sigma \to GL_2(R)$.  Let $R^{red}$ be the quotient of $R$ by its nilradical and let $\rho^{red}$ be the composition of $\rho^{univ}$ with $R \twoheadrightarrow R^{red}$.

Because $\rho_{g}$ with a lattice as in equation (\ref{eqn_g}) is a deformation of $\overline{ \rho_{g}}$, universality implies there is an $\mathcal{O}$-algebra map $R^{red} \to \mathcal{O}$, which must be surjective (and $\mathcal{O}$ has characteristic zero).  With the conditions on the Selmer groups in the statement we apply Theorem \ref{main_thm} to get $R^{red}$ is a DVR, which implies $R^{red}\cong \mathcal{O}$.  So $\rho^{red}\cong\rho_{g}$ and $\rho^{red}$ is modular.  Again by Ribet's lemma \cite[Proposition 2.1]{ribet1976modular} there is a $G_\Sigma$-stable lattice in which $\rho$ is valued and $\overline{\rho}$ is not semi-simple.  The class of the extension of $\overline{\epsilon}^{k-1}$ by 1 determining $\overline{\rho}$ must be the same as $\overline{\rho_g}$, because there is only one by (ii). So $\rho$ is also a deformation of $\overline{\rho_g}$, and the conclusion is that $\rho\cong \rho_{g}$.
\qed
\end{mythm}

\begin{myrem}\label{rem:2dim}
The modularity of $\rho$ in Theorem \ref{thm_2dim} is already known.  Assume the hypotheses of Theorem \ref{thm_2dim} and let $\rho' : G_\Q \to GL_2(E)$ be irreducible, Fontaine--Laffaille at $p$, and $\overline{\rho'}^{ss}\cong 1\oplus \overline{\epsilon}^{k-1}$.  Also assume $\rho'$ is unramified outside $\Sigma$.  It is known that $\rho ' |_{D_p}$ has distinct Hodge-Tate weights: $\rho ' |_{D_p}$ is, upto a twist by a unique power of $\epsilon$, crystalline with Hodge-Tate weights $\left\{ 0,r-1 \right\}$ for some integer $r\geq 2$ (cf. Sec. 3.1 of \cite{breuil2003quelques}); and that $\rho'$ is odd (cf. \cite[Proposition 1.2]{berger2018oddness}).  Then the Fontaine-Mazur conjecture in the residually reducible case (\cite[Theorem 1.0.2]{pan2022fontaine} and \cite{skinner1999residually}) implies $\rho'$ arises from a cuspidal eigenform.  As $\rho'$ is unramified outside $\Sigma$, it factors through $\rho':G_\Sigma \to GL_2(E)$.  One may view Theorem \ref{thm_2dim} as an alternative method for a modularity result in a specific case.

\end{myrem}

\begin{myrem}
If we consider $\overline\rho = \left[\begin{smallmatrix}1 & *\\  & \overline{\epsilon}^{k-1}\end{smallmatrix}\right]$ not split as in the proof of Theorem \ref{thm_2dim}, the analogue of Serre's modularity conjecture for reducible, odd residual representations proved in many cases \cite{hamblen2008deformations, fakhruddin2020lifting} implies $\overline \rho$ arises from a modular form of weight $k$ and level prime to $p$.  The proof of this by Fakhruddin, Khare, and Patrikis \cite[Theorem 7.4]{fakhruddin2020lifting} first produces a lift to characteristic zero with sufficient conditions to apply modularity results of \cite{skinner1999residually} and \cite{pan2022fontaine}.
\end{myrem}

\subsection{Examples in prime level}
Let $k\geq 4$ be an even integer and let $p>k+1$ be a prime.  Suppose $p$ is a regular prime and $\ell$ is a prime such that $p\mathrel\Vert \ell^k -1$.  Put $\Sigma=\left\{ p, \ell\right\}$.

\begin{myprop}
The set $\Sigma$ satisfies condition (ii) of Theorem \ref{thm_2dim}.
\proof
Put $\eta_k:= B_k(1-\ell^k)$.  Since $p$ is a regular prime with $p\mathrel\Vert \ell^k -1$, it follows that $\ord_\varpi (\eta_k) = \ord_\varpi ((1-\ell^k))=1$ and $\# \mathcal{O}/\eta_k=\# \F$.  By \cite[Proposition 5.7]{berger2019modularity}, one has $\# H^1_{\Sigma} (\Q, \epsilon^{1-k}\otimes E/\mathcal{O}) \leq \# \mathcal{O}/\eta_k$.
\qed
\end{myprop}

\begin{myprop}
If $p\nmid (\ell +1)$ and $p\nmid (\ell -1)$, then the set $\Sigma$ satisfies conditions (i) and (iii) of Theorem \ref{thm_2dim}.
\proof
Condition (i) is immediate and $\ell^2 \not \equiv 1 \mod p$.  We now consider condition (iii).  Since $p \mathrel| (\ell^{k-2}-1)$ would imply $\ell^k = \ell^2 \ell^{k-2}\not \equiv 1 \mod p$, contrary to $p\mathrel |(\ell^k -1)$, we get $p \mathrel \nmid (\ell^{k-2}-1)$.  Then $H^1_\Sigma(\Q, \overline{\epsilon}^{k-1})= H^1_{\left\{ p \right\}}(\Q, \overline{\epsilon}^{k-1})$ by \cite[Lemma 6.3]{berger2019modularity} (where we put $n=1-k$ and note that $\ord_p (\ell^{k-2}-1)=\ord_p(\ell^{2-k})$).  We apply \cite[Proposition 6.5]{berger2019modularity}, using $p$ is regular and $k-1 \not \equiv 1 \mod (p-1)$, to get $\dim_{\F} H^1 (G_{\left\{ p \right\} },\overline{\epsilon}^{k-1}) \leq 1$, and note that the Selmer group is a subspace.
\qed
\end{myprop}

With $p,k, \ell,$ and $\Sigma$ as at the start of this subsection with $p\nmid (\ell +1)$ and $p\nmid (\ell -1)$, we apply Theorem \ref{thm_2dim}.  The conclusion is that if $\rho:G_{\Sigma} \to GL_2(E)$ is irreducible, Fontaine--Laffaille at $p$, and $\overline\rho^{ss}\cong 1\oplus \overline{\epsilon}^{k-1}$, then $\rho\cong \rho_{f}$ where $f \in S_{k}(\ell)$ (i.e. $\rho$ is modular).  We note $\ell <p$ implies $p\nmid (\ell +1)$ and $p\nmid (\ell -1)$.  For $p<30$ and $\ell <p$, we provide the following explicit examples of $\ell$ and $k$ when these conditions hold.  When $p=13$, there is $S_6(3)$, $S_4(5)$, and $S_8(5)$.  When $p=17$, there is $S_8(2)$, $S_4(13)$, and $S_8(13)$.  When $p=19$, there is $S_6(7)$ and $S_6(11)$.  When $p=29$, there is $S_4(17)$.

\subsection{Example of $p|B_k$ and level 1}
Let $p=691$, $k=12$, and $\Sigma = \left\{ 691 \right\}$.  Note $p=691\nmid B_{680}$ since 691 is prime with irregular index 2, $691 | B_{12}$, and $691 | B_{200}$ (see for example $A073277$ of \cite{sloane2003line} and \cite[p. 32]{lehmer1954application}).

It suffices that the coefficient field $E$ be $\Q_p$ since the unique normalized newform $\Delta \in S_{12}(SL_2(\Z))$ has integer Fourier coefficients and $\rho_\Delta: G_\Q \to GL_2(\Q_{691})$.  Let $\F_{691}:=\Z_{691}/691$.  Now $\epsilon$ is the 691-adic cyclotomic character and $\overline{\epsilon}$ is the mod $691$ cyclotomic character.  We consider the $G_{\left\{ 691\right\}}$-modules ${\epsilon}^{-11}$, $\overline{\epsilon}^{-11}$, ${\epsilon}^{11}$ and $\overline{\epsilon}^{11}$.

\begin{myprop} One has $\dim_{\F_{691}} H^1_{\left\{691 \right\}} (\Q, \overline{\epsilon}^{11}) \leq 1.$
\proof
As $691 \nmid B_{680}$ it follows that the $\overline{\epsilon}^{1-680}$-eigenspace of the 691-part of $Cl_{\Q(\mu_{691})}$ is trivial by the Herbrand--Ribet theorem \cite{ribet1976modular}.  Since $1-680 \equiv 11 \mod 690$, the $\overline{\epsilon}^{11}$-eigenspace of the $691$-part of $Cl_{\Q(\mu_{691})}$ is trivial.  By \cite[Proposition 6.5]{berger2019modularity}, with $k-1 = 11 \not \equiv 1 \mod (p-1)$, we know the second inequality in
$$
\dim_{\F_{691}} H^1_{\left\{691 \right\}} (\Q, \overline{\epsilon}^{11}) \leq \dim_{\F_{691}} H^1 (G_{\left\{ 691 \right\} },\overline{\epsilon}^{11}) \leq 1
$$
holds and the first inequality follows from the Selmer group being a subspace.
\qed
\end{myprop}

\begin{myprop} One has $\# H^1_{\left\{ 691 \right\}} (\Q, \epsilon^{-11}\otimes \Q_{691}/\Z_{691}) \leq \# \F_{691}.$
\proof
As in the proof of Proposition 5.7 in \cite{berger2019modularity} and using the fact that $691 \mathrel\Vert B_{12}$, the inequality follows from the main conjecture of Iwasawa theory proved by Mazur and Wiles.
\qed
\end{myprop}

Applying Theorem \ref{thm_2dim} the conclusion is that if $\rho:G_{\left\{ 691\right\}} \to GL_2(\Q_{691})$ is irreducible, Fontaine--Laffaille at $691$, and $\overline\rho^{ss}\cong 1\oplus \overline{\epsilon}^{11}$, then $\rho\cong \rho_{\Delta}$ where $\Delta \in S_{12}(SL_2(\Z))$ (i.e. $\rho$ is modular).

\section{Modularity of 4-dimensional representations of $\Q(i)$ with Hermitian modular forms}\label{sec:4dim}
Fix $K=\Q(i)$.
\begin{myrem}
We restrict the exposition to the special case of $K=\Q(i)$ for the sake of simplicity of presentation, but Theorem \ref{thm_4dim} also applies to imaginary quadratic fields with odd discriminant.  Congruences between Hermitian modular forms associated to unitary groups in this general case are considered in \cite[Theorem 8.10]{klosin2015maass}.
\end{myrem}

\subsection{Hermitian modular forms}
Denote the Hermitian upper-half plane by
$$
\mathcal{H}_2 = \left\{ Z\in M_2(\C): -I (Z-Z^*)>0 \right\},
$$
where $I = \left[ \begin{smallmatrix} i & 0 \\ 0 & i \end{smallmatrix} \right]$ and $Z^*$ is the conjugate transpose of $Z$.  Let
$$
G_\mu:=GU(2,2) = \left\{ A \in Res_{K/\Q} {GL_4}_{/K} : AJA^* = \mu(A)J\right\}
$$
be the unitary group associated to $K$ over $\Q$ where $J = \left[ \begin{smallmatrix}  & -I_2 \\ I_2 &  \end{smallmatrix} \right]$ with $I_2$ the $2\times 2$ identity matrix, $Res_{K/\Q}$ the Weil restriction, and $\mu(A) \in \mathbf{G}_m$.  Let
$$
G_\mu^+(\R) := \left\{ g \in G_\mu(\R) : \mu(g)>0\right\}
$$
and
$$
G := U(2,2)=\left\{ g \in G_\mu(\R) : \mu(g)=1\right\}.
$$
There is an action for $\gamma \in G_\mu^+(\R)$ on $\mathcal{H}_2$ defined by
$$
\gamma Z = (a_\gamma Z + b_\gamma) (c_\gamma Z + d_\gamma)^{-1}
$$
where $\gamma = \left[ \begin{smallmatrix} a_\gamma & b_\gamma \\ c_\gamma & d_\gamma \end{smallmatrix}\right] \in G_\mu^+(\R)$.  For a holomorphic function $F$ on $\mathcal{H}_2$, an integer $m$ and $\gamma \in G_\mu^+ (\R)$ put
$$
F|_m \gamma (Z)= \mu(\gamma)^{2m-4}j(\gamma, Z)^{-m}F(\gamma Z),
$$
with $j(\gamma,Z)=\det(c_\gamma Z + d_\gamma)$.  Define the \textit{Hermitian modular group} associated to $K$ as
$$
\Gamma_\Z :=\left\{ A\in GL_4(\mathcal{O}_K) : AJA^* =J\right\}.
$$
There are congruence subgroups of $G(\Q)$ analogous to $\Gamma_0(N), \Gamma_1(N)$ and $\Gamma(N)$ of $SL_2(\Z)$ \cite[Sec. 2.2]{klosin2009congruences}.  For a congruence subgroup $\Gamma^h$ of $\Gamma_\Z$, a holomorphic function $F$ on $\mathcal{H}_2$ is a \textit{Hermitian modular form} of weight $m$ and level $\Gamma^h$ if
$$
F|_m \gamma = F \quad \text{ for all } \gamma \in \Gamma^h.
$$
Denote the $\C$-vector space of Hermitian modular forms of weight $m$ and level $\Gamma_\Z$ by $\mathcal{M}_m(\Gamma_\Z)$.  A Hermitian modular form of level $\Gamma_\Z$ has a Fourier expansion
$$
F(Z)=\sum_{\tau \in \mathcal{S}}c(\tau)exp(2\pi i \; {\tr \tau Z}) \quad \text{ with $c(\tau) \in \C$}
$$
where $\mathcal{S}=\left\{ \left[\begin{smallmatrix} s_1 & s_2 \\ \overline{s_2} & s_3\end{smallmatrix}\right] \in M_2(K): s_1, s_3 \in \Z, s_2 \in \frac{1}{2}\mathcal{O}_K\right\}$.  Denote by $\mathcal{S}_m(\Gamma_\Z)$ the subspace inside $\mathcal{M}_m(\Gamma_\Z)$ of \textit{cusp forms} of weight $m$ and level $\Gamma_\Z$ (see \cite[Sec. 2.3]{klosin2009congruences}).

\subsection{Notation}
Here we introduce some notation used by \cite{klosin2009congruences}.  Let $k$ be a positive integer divisible by 4 and $p>k$ a prime.  Let $\Sigma$ be a finite set of finite primes of $K$ containing $\Sigma_p$, the set of primes of $K$ over $p$.  Let $G_\Sigma$, $\F$, $E$, $\mathcal{O}$, and $\varpi$ be as in Section \ref{setup}.  Denote by $\ord_\varpi$ the $\varpi$-adic valuation.  For $z \in \C$, denote by $\overline z$ the complex conjugate of $z$.

\begin{enumerate}
\item Let $\mathcal{N}$ denote a basis of $S_{k-1}(4,(\frac{-4}{.}))$ consisting of primitive (newforms) normalized eigenforms.  For $f=\sum^\infty_{n=1}a_n q^n \in \mathcal{N}$, set $f^\rho := \sum^\infty_{n=1}\overline{a_n}q^n$.
\item For a Hecke character $\chi$ of $K$, we denote by ${\cond \chi}$ the conductor of $\chi$.
\item Let $f\in\mathcal{N}$ and $\psi$ a Hecke character of $K$.  We will use the $L$-function $L(BC(f), s, \psi)$ (see Def. 6.1 in [loc. cit.]) and the symmetric square $L$-function $L({\Symm^2 f},s)$ (see (4.9) in [loc. cit.]).  Then assuming $f$ is ordinary at $p$, $L^{int}(BC(f), s, \psi)$ (resp. $L^{int}({\Symm^2 f},s)$) is defined by dividing $L(BC(f), s, \psi)$ (resp. $L({\Symm^2 f},s)$) by a suitable period (see (7.20) in [loc. cit.]).
\item The Maass space is the $\C$-linear subspace of $\mathcal{S}_k(\Gamma_\Z)$ consisting of $F\in \mathcal{S}_k(\Gamma_\Z)$ whose Fourier expansion satisfies suitable conditions (see Sec. 4.1 in [loc. cit.]).  For any normalized Hecke eigenform $f\in S_{k-1}(4, (\frac{-4}{.}))$ there is a lifting $F_f$ in the Maass space satisfying $F_f \neq 0$ only if $f\neq f^\rho$.  When $f\neq f^\rho$, $F_f$ is called the \textit{Maass lift} of $f$ or \textit{CAP lift} of $f$ (Def. 4.4 in [loc. cit.]).  The orthogonal complement of the Maass space with respect to the Petersson inner product consists of non-CAP forms.
\item The Hermitian Hecke algebra $\T^h_\C$ is defined as a certain subalgebra of $End_\C(\mathcal{S}_k(\Gamma_\Z))$, and $F \in \mathcal{S}_k(\Gamma_\Z)$ is an eigenform if it is an eigenfunction for all $T\in\T^h_\C$.  One defines $\Z$-subalgebras $\T^h_\Z$ and $\T^h_\mathcal{O}$ of $\T^h_\C$ in a natural way.  For $F\in \mathcal{S}_k(\Gamma_\Z)$ an eigenform denote by $\lambda_{F,\C}(T)$ the eigenvalue of $T \in \T^h_\C$ corresponding to $F$.  There exists a number field $L_F$ with ring of integers $\mathcal{O}_{L_F}$ such that $\lambda_{F,\C}(T) \in \mathcal{O}_{L_F}$ for all $T \in \T^h_{\mathcal{O}_{L_F}}$ (Theorem 5.9 in [loc. cit.]).  We choose $E$ to contain $L_F$ for all $F$ in a basis of eigenforms.  To each eigenform $F$ there is an $\mathcal{O}$-algebra morphism $\lambda_F : \T^h_\mathcal{O}\to \mathcal{O}$ assigning to $T$ the eigenvalue of $T$ corresponding to $F$.
\end{enumerate}

\subsection{Congruence result of Klosin}\label{sec_congruence}
We now describe Theorem 7.12 and Corollary 7.17 in \cite[Section 7.3]{klosin2009congruences}.  Let $f\in \mathcal{N}$ be ordinary at $p$ and such that $\overline{\rho_f}|_{G_K}$ is absolutely irreducible.  We fix a Hecke character $\chi$ of $K$ under certain conditions.\footnote{Fix a positive integer $m\in S_{f,p}$ (see Definition 7.11 in \cite{klosin2009congruences}).  Fix $\chi$ such that $\ord_p ({\cond \chi})=0$ and $\chi_\infty (z) = \left(\frac{z}{|z|}\right)^{-t}$, with $-k\leq t < -6$ and $p\nmid \# X_{-t-2,4mN_{K/\Q}({\cond \chi})}$ (see Sec. 7.1 in [loc. cit.]).}

If
\begin{align*}
-n:= {\ord_\varpi} &\left(\prod^2_{j=1}L^{int}(BC(f), j+(t+k)/2,\overline{\chi \omega}) \right) \\
&- \ord_\varpi (L^{int}({\Symm^2 f},k)) <0
\end{align*}

where $\omega$ is the unique Hecke character of $K$ which is unramified at all finite places and such that
$$
\omega_\infty (z) = \left(\frac{z}{|z|}\right)^{-k}
$$
then there exists a non-CAP cuspidal Hecke eigenform $F$ such that
\begin{equation}\label{eqn_congruence}
\ord_\varpi (\lambda_{F_f}(T^h) - \lambda_{F}(T^h)) > 0
\end{equation}
for all Hecke operators $T^h \in \T^h_\mathcal{O}$.

\subsection{Galois representations attached to Hermitian modular forms}

\begin{mythm}\label{thm_galois_hermitian}
Let $F \in \mathcal{S}_k (\Gamma_\Z)$ be an eigenform.  There exists a finite extension $E_F$ of $\Q_p$ and a 4-dimensional semi-simple Galois representation $\rho_F : G_K \to GL_{E_F}(V)$ unramified away from the primes of $K$ dividing $2p$ and such that:
\begin{enumerate}
\item For any prime $\mathfrak{q}$ of $K$ such that $\mathfrak{q} \nmid 2p$, the set of eigenvalues of $\rho_F(\Frob_\mathfrak{q})$ coincides with the set of Satake parameters of $F$ at $\mathfrak{q}$.
\item If $\mathfrak{q}$ is a place of $K$ over $p$, the representation $\rho_F|_{D_\mathfrak{q}}$ is crystalline.
\item For $p>k$, and $\mathfrak{q}$ a place of $K$ over $p$, the representation $\rho_F|_{D_\mathfrak{q}}$ is Fontaine--Laffaille.
\end{enumerate}
\proof
This theorem appears as \cite[Theorem 9.2]{klosin2009congruences} (see also \cite[Theorem 6.7]{klosin2015maass}).  It is proved in \cite[Theorem B]{skinner2012galois}.
\qed
\end{mythm}

\subsection{Modularity theorem}
We assume the hypotheses of the main congruence result of Section \ref{sec_congruence}.  In particular $k$ and $p$ are as above.  Under these conditions, the eigenform $f\in \mathcal{N}$ satisfies $f\neq f^\rho$ (by \cite[Proposition 8.13]{klosin2009congruences} with $\overline{\rho_f}|_{G_K}$ absolutely irreducible) and $F_f \in \mathcal{S}_k(\Gamma_\Z)$ is also an eigenform to which we attach the Galois representation $\rho_{F_f}$ over $E$.  The standard $L$-function of $F_f$ factors \cite[Proposition 6.4]{klosin2009congruences}, which implies
$$
\rho_{F_f} =
\begin{bmatrix}
\rho_f|_{G_K}  &  \\
 & (\rho_f \otimes \epsilon)|_{G_K}
\end{bmatrix}
$$
because part (1) of Theorem \ref{thm_galois_hermitian} implies the factors at $\mathfrak{q}$ coincide:  $L(\rho_{F_f},s)_\mathfrak{q} = L_{st}(F_f,s)_\mathfrak{q}$, which relates Hecke eigenvalues to the characteristic polynomial and eigenvalues of $\rho_{F_f}(\Frob_\mathfrak{q})$ for all $\mathfrak{q}\nmid 2p$.  Let $F$ be a non-CAP cuspidal Hecke eigenform with eigenvalues congruent to those of $F_f$ as in (\ref{eqn_congruence}) of Section \ref{sec_congruence}.  The congruence implies
$$
\tr \rho_F (\Frob_\mathfrak{q}) \equiv \tr\rho_{F_f}(\Frob_\mathfrak{q}) \mod \varpi \quad \quad \text{for all $\mathfrak{q}\nmid 2p$}
$$
by part (1) of Theorem \ref{thm_galois_hermitian}.  Since $\left\{ \Frob_\mathfrak{q} : \mathfrak{q} \nmid 2p \right\}$ is dense in $G_K$ by the Chebotarev Density Theorem, we obtain $\tr \overline{\rho_F}  = \tr \overline{\rho_{F_f}}$, and then $\overline{\rho_F}^{ss} \cong \overline{ \rho_{F_f}}^{ss}$ by the Brauer--Nesbitt Theorem.  Since $\rho_{F_f}$ is split, $\overline{\rho_{F_f}}$ is split and then $\overline{\rho_{F_f}} \cong \overline{\rho_{F_f}}^{ss}$.  Hence $\overline{\rho_F}^{ss} \cong \overline{\rho_f}|_{G_K} \oplus (\overline{(\rho_f \otimes \epsilon)}|_{G_K})$.

We view $\rho_f$, $\rho_F$ and $\rho_{F_f}$ as representations of $G_\Sigma$.  Set $\tau_1 := \overline{\rho_f}|_{G_\Sigma}$, and $\tau_2 := \overline{(\rho_f\otimes \epsilon)}|_{G_\Sigma}$.  Set $\tilde \tau_1 := \rho_f|_{G_\Sigma}$, and $\tilde\tau_2 := (\rho_f\otimes \epsilon)|_{G_\Sigma}$.

\begin{mythm}\label{thm_4dim}
Let $K=\Q(i)$.  Let $k$ be a positive integer divisible by 4 and $p>k$ a prime.  Let $f\in \mathcal{N}$ be ordinary at $p$ and such that $\overline{\rho_f}|_{G_K}$ is absolutely irreducible.  Fix $\chi$ and $\omega$ Hecke characters of $K$ as in Section \ref{sec_congruence}.  Assume that for each non-CAP cuspidal Hecke eigenform $F$ congruent to $F_f$ as in (\ref{eqn_congruence}) of Section \ref{sec_congruence}, the representation $\rho_F$ is irreducible.  Also assume
\begin{enumerate}
\item the Fontaine--Laffaille universal deformation rings for $\tau_1,\tau_2$ equal $\mathcal{O}$,
\item $\# H^1_\Sigma(K,\Hom_\mathcal{O}(\tilde\tau_2, \tilde\tau_1)\otimes E/\mathcal{O}) \leq \# \mathcal{O}/\varpi$, and
\item $\dim_\F H^1_\Sigma(K, \Hom_\F(\tau_1, \tau_2))\leq1$.
\end{enumerate}
Then if
\begin{align*}
-n:= \ord_\varpi & \left(\prod^2_{j=1}L^{int}(BC(f), j+(t+k)/2,\overline{\chi \omega}) \right) \\
&- \ord_\varpi (L^{int}({\Symm^2 f},k)) <0,
\end{align*}
$\rho: G_\Sigma \to GL_4(E)$ is irreducible, Fontaine-Laffaille at all $\mathfrak{q}|p$, and $\overline{\rho}^{ss}\cong \tau_1 \oplus \tau_2$, then $\rho \cong \rho_F$ where $F\in \mathcal{S}_k(\Gamma_\Z)$ (i.e. $\rho$ is modular).
\proof
As above $\overline{\rho_F}^{ss} \cong \tau_1 \oplus \tau_2$.  We use a higher-dimensional analogue of Ribet's lemma as in \cite[Proposition 8.1]{berger2013deformation}, which is a special case of a theorem of Urban, to get a lattice inside the space of $\rho_F$ such that
\begin{equation}\label{eqn_rhof}
\overline {\rho_F} = \begin{bmatrix}
\tau_1  &  *\\
 & \tau_2
\end{bmatrix}
\end{equation}
and is not semi-simple.

Consider the deformation problem that for each object $A$ of $\mathcal{C}$, the category of local complete Noetherian $\mathcal{O}$-algebras with residue field $\F$, assigns the set of strict equivalence classes of Fontaine-Laffaille at $v \in \Sigma_p$ deformations of $\overline{\rho_F}$ to $A$, which results in a pair $(\rho^{univ},R)$ representing the deformation problem.  Let $R^{red}$ be the quotient of $R$ by its nilradical and $\rho^{red}:G_\Sigma \to GL_4(R^{red})$ be the composition of $\rho^{univ}$ with $R\twoheadrightarrow R^{red}$.  By Theorem \ref{thm_galois_hermitian} (2) and (3), $\rho_F$ is Fontaine-Laffaille at all $\mathfrak{q}|p$.  Because $\rho_F$ with a lattice as in equation (\ref{eqn_rhof}) is a deformation of $\overline{\rho_F}$ to $\mathcal{O}$, there is a surjective $\mathcal{O}$-algebra map from $R\to \mathcal{O}$ factoring through $R^{red}$.  Theorem \ref{main_thm} implies $R^{red}$ is a DVR, which gives $R^{red}\cong \mathcal{O}$.  Then $\rho^{red} \cong \rho _F$ and $\rho^{red}$ is modular.  Applying \cite[Proposition 8.1]{berger2013deformation}, which requires that $\rho$ be irreducible, there is a $G_{\Sigma}$-stable lattice in which $\rho$ is valued and $\overline{\rho}$ is not semi-simple.  The extension class of $\overline{\rho}$ must be that of $\overline{\rho_F}$ because there is only one by (2), so $\rho$ is a deformation of $\overline{\rho_{F}}$ to $\mathcal{O}$.  We have $\rho \cong \rho^{red}\cong \rho_{F}$ and $\rho$ is modular.
\qed
\end{mythm}

\begin{myrem}
The hypotheses that $f \neq f^\rho$ and $\overline{\rho_f}|_{G_K}$ is absolutely irreducible are used in the method for constructing the Maass lift and congruence.  The ordinarity assumption on the eigenform $f\in \mathcal{N}$ is used to ensure $F$ is orthogonal to the Maass space.  These conditions also relate to the assumptions on the Hecke character $\chi$ (Definition 7.11 of \cite{klosin2009congruences}).  If $\chi$ is chosen to satisfy
$$
\ord_\varpi \left(\prod^2_{j=1}L^{int}(BC(f), j+(t+k)/2,\overline{\chi \omega}) \right) = 0,
$$
then the condition in Theorem \ref{thm_4dim} becomes $\ord_\varpi (L^{int}({\Symm^2 f},k)) >0$.  The existence of a character $\chi$ satisfying this is not known in general (as mentioned in \cite[Remark 7.15]{klosin2009congruences}).

The congruence involving the Maass lift is used to provide evidence for the Bloch-Kato conjecture for special $L$-values of $L^{int}({\Symm^2 f},k)$ by Klosin (see \cite[Sec. 9]{klosin2009congruences}).  Hypothesis (2) of Theorem \ref{thm_4dim} (and hypothesis (1) of Theorem \ref{main_thm}) relate to $L$-values and cases of the Bloch-Kato conjecture, which are generally known in only some cases (such as $n=2$ in Section \ref{sec:2dim}). So, the modularity result above holds under assumptions consistent with the Bloch-Kato conjecture in this case.
\end{myrem}

\section*{Acknowledgements}
I would like to thank my doctoral advisor Krzysztof Klosin for many helpful comments and advice.  I would like to thank the referee for many helpful comments.

\end{document}